\documentclass[a4paper,11pt]{article}
\usepackage[numbers]{natbib}
\usepackage{cleveref}
\usepackage{amsmath}
\usepackage{amsfonts}
\usepackage{amssymb}
\usepackage{mathrsfs}
\usepackage{amsthm}
\usepackage{graphicx}
\usepackage{pdfpages}
\newtheorem{defn}{Definition}[section]
\newtheorem{ex}{Example}[section]
\newtheorem{n}{Note}[section]
\newtheorem{theorem}{Theorem}[section]
\newtheorem{co}{Corollary}[section]
\newtheorem{lem}{Lemma}[section]
\newtheorem{p}{Proposition}[section]

\newtheorem{cun}{Cunjecture}[section]
\begin{document}
	\title{Introducing a new concept of distance on a topological space by generalizing the definition of quasi-pseudo-metric
	}
	\author{Hamid Shobeiri\footnote{Email address: shobeiri@mail.kntu.ac.ir (A.H. Shobeiri)}\\
		\footnotesize{{\it Department of Mathematics, K.N. Toosi
				University of Technology,}} \\
		\footnotesize{{\it P.O.Box 16315-1618, Tehran, Iran}}\\
	}
	\date{}
	\maketitle
	\begin{abstract}
		In this paper, a new structure is defined on a topological space that equips the space with a concept of distance in order to do that firstly, a generalization of quasi-pseudo-metric space named R.O-metric space is introduced, and some of its basic properties is studied. Afterwards the concept of generalized R.O-metric space is defined .Finally, we establish that every topological space is generalized R.O-metrizable.
	\end{abstract}
	\textbf{Keywords:} Topological space, Quasi-pseudo-metric space, R.O-metric space, Generalized R.O-metric space. \\
	\textbf{AMS Subject Classifications:} 54D80, 54D35, 54E35, 54E99, 54D65 .
	\section{Introduction}
	Topological spaces are extension of metric spaces. It is well known that each arbitrary topological space is not necessary metrizable (see \cite{munkres1975topology} or \cite{simmons1963introduction}). Therefore despite of the beauty and simplicity of such extension, it involves some limitations. For example, size of neighborhoods of two distinct points are not comparable in topological spaces. In addition, uniform continuity, Cauchy sequence and complete space are no more definable in arbitrary topological spaces. These limitations may raise the idea of defining topological spaces through a new concept of distance, in order to simplify working in these spaces.Defining a new concept of distance, will be useful. In this direction, some mathematicians introduced some structures weaker than metric spaces.
	
	A metric on a set $X$ is a function $d:X\times X\to [0,\infty)$ such that for all $x,y,z\in X$, the following conditions are satisfied:
	\begin{align}
	d(x,y)=0\Leftrightarrow x=y,\label{I.1}\\ 
	d(x,y)=d(y,x)\label{I.2},\\
	d(x,z)\leq d(x,y)+d(y,z). \label{I.3}
	\end{align}
	One of the generalized metric spaces is semi-metric space that is introduced by Frechet and Menger which satisfies conditions \eqref{I.1} and \eqref{I.2} of definition of metric space (see \cite{cicchese1976questioni}, \cite{wilson1931semi}, \cite{frechet1906quelques} and \cite{menger1928untersuchungen}). In the last few years, the study of non-symmetric topology has received a new derive as a consequence of it's applications to the study of several problems in theoretical computer science and applied physics.
	One of such structures is quasi-metric space that is introduced by W.A.Wilson (see \cite{wilson1931quasi}) which has conditions \eqref{I.1} and \eqref{I.3}. One other generalization of metric spaces is called pseudo-metric space, which satisfies conditions $(\displaystyle d(x,x)=0)$, (2) , (3) (see \cite{simmons1963introduction}). Quasi-pseudo-metric space is introduced by Kelly (see \cite{kelly1963bitopological}) which satisfies conditions $( d(x,x)=0)$ and (3). $T_0$-quasi-metric space is quasi-pseudo-metric space that satisfies condition $(d(x,y)=0=d(y,x)\Rightarrow x=y)$ that is presented in paper \cite{kemajou2012isbell}.
	Multi-metric space is defined by Smarandache $($see \cite{smarandache2000mixed}, \cite{smarandache2001unifying}$)$, which is a union $\tilde{M} =\bigcup_{i=1}^{n}M_i$, such that each $M_i$ is a space
	with metric $d_i$ for all $1\le i\le n$.
	
	The above mentioned structures can not describe all topological spaces. In this paper, it is aimed to present a new structure to be able to describe all topological spaces. It is started by definition of structure that is called R.O-metric space (Right-Oriented-metric space: this terminology comes from non-symmetric meter)  which is a generalization of quasi-pseudo-metric space. Then generalized R.O-metric is defined which reforms the definition of topological space. In the first section, the concept of R.O-metric space is defined. In the second section, R.O-metric space is generalized and improved by adding some conditions.
	\section{Preliminaries}
	\begin{defn}
		Let $X$ be a non empty set; a function  $\overrightarrow{d}:X\times X\to [0,\infty)$ is called a R.O-metric on $X$ iff for every $x,y,z\in X$, the following conditions hold:
		\begin{enumerate}
			\item$\overrightarrow{d}(x,x)=0$,
			\item $\overrightarrow{d}(x,z)+\overrightarrow{d}(z,y)\neq 0 \Rightarrow \overrightarrow{d}(x,y)\le \overrightarrow{d}(x,z)+\overrightarrow{d}(z,y),$
		\end{enumerate}
		and then $(X,\overrightarrow{d})$ is called a R.O-metric space.
	\end{defn}
	\begin{ex}
		Every metric space is a R.O-metric space.
	\end{ex}
	\begin{ex}
		As another example , for $X=\{a,b,c\}$ consider $$\overrightarrow{d}(a,b)=\overrightarrow{d}(b,c)=1\; , \overrightarrow{d}(a,c)=2\; ,$$$$\overrightarrow{d}(a,a)=\overrightarrow{d}(b,b)=\overrightarrow{d}(c,c)=\overrightarrow{d}(b,a)=\overrightarrow{d}(c,a)=\overrightarrow{d}(c,b)=0,$$ then $(X,\overrightarrow{d})$ is a R.O-metric space.\ref{fig:55}

\begin{figure}
	\centering
	\includegraphics[width=0.7\linewidth]{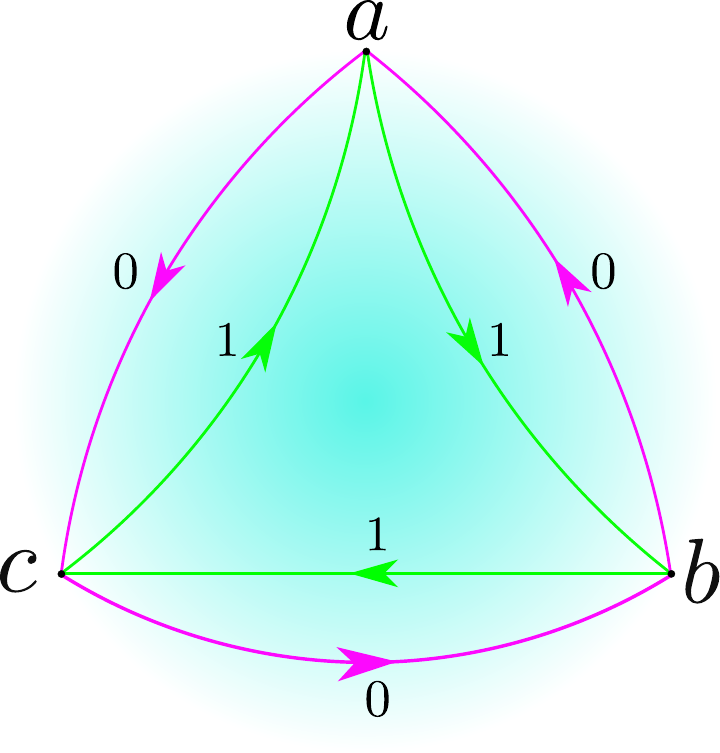}
	\label{fig:55}
\end{figure}

	\end{ex}
	\begin{defn}
		In a R.O-metric space $(X,\overrightarrow{d})$,
		the set, $V_{r}(p)=\{x\;|\;\overrightarrow{d}(p,x)<r\}$ is called a $r$-ball of a point $p$ with radius $r>0$.
	\end{defn}
	\begin{n}
		Let $(X,\overrightarrow{d})$ be a R.O-metric space ,
		then the set $S_{\overrightarrow{d}}=\{V_{r}(p)\;|\;r>0,p\in X\}$ is a subbasis for a topology on X, which is called the generated topology by $\overrightarrow{d}$ and is shown by $\tau_{\overrightarrow{d}}.$
	\end{n}
	\begin{defn}
		Topological space $(X,\tau)$ is called R.O-metrizable iff there exists a R.O-metric $\overrightarrow{d}$ such that $\tau_{\overrightarrow{d}}=\tau$.
	\end{defn}
	
	It can be shown that many of the most familiar topological spaces are R.O-metric spaces, here are some examples of non metrizable topological spaces which are R.O-metric spaces:
	\begin{ex}
		Let $X$ be a set and $\phi \neq A\subseteq X$, then $\tau_{A}=\{B\subseteq X \;|\; A\subseteq B\}\cup \{\emptyset\}$ is a topology on $X$; we define R.O-metric $\overrightarrow{d}$ on $X$ as follows:
		\begin{enumerate}
			\item $\forall x\in X;\;\overrightarrow{d}(x,x)=0,$  \item $\forall a \in X , \forall b\in A, \; \overrightarrow{d}(a,b)=0,$\item $\forall a\in X , \forall b\in A^{c} ;\; \overrightarrow{d}(a,b)=1.$
		\end{enumerate}
		
		The topology $\tau_{\overrightarrow{d}}$ is generated by subbasis $$S_{\overrightarrow{d}}=\{V_{r}(p)\;|\;r>0,p\in X\}=\{V_{1}(a)\;|\;a\in A\}\cup{\{V_{1}(b)\;|\;b\in A^{c}\}}\cup{\{V_{2}(x)\;|\;x\in X\}}$$
		$$\;\;\;\;\;\;\;\;\;\;\;\;\;\;\;\;\;\;\;\;\;\;\;\;\;=\{A\}\cup{\{\{b\}\cup{A}\;|\;b\in A^{c}\}}\cup{\{X\}}.$$
		Thus $\tau_{\overrightarrow{d}}=\tau_{A}$.
	\end{ex}
	\begin{ex}\label{13}
		Let $\tau$ be cofinite topology on infinite set $X$, that means topology in which the open sets are the subset of $X$ with finite complements. It is known that (see \cite{hrbacek1999introduction}) the set $X$ can be written as $X=\bigcup_{\alpha\in I}{A_{\alpha}}$ such that for all $\alpha\in I$, $A_{\alpha}$ is countable and $A_{\alpha}=\{x_{\alpha,1},x_{\alpha,2},...\}$. Now define R.O-metric $\overrightarrow{d}$ on $X$ as follows:
		\begin{enumerate}
			\item$\forall x\in X;\;\overrightarrow{d}(x,x):=0,$\item$\forall\alpha\in I,\;\forall n\in O;(O\;is\;the\;odd\;natural\;numbers)\\\;\overrightarrow{d}(x_{\alpha,n},x_{\alpha,n+1}):=1=\overrightarrow{d}(x_{\alpha,n+1},x_{\alpha,n})$,\item$\overrightarrow{d}(x_{\alpha,n},y):=0=\overrightarrow{d}(x_{\alpha,n+1},z),\;\forall y\ne x_{\alpha,n+1},\;\forall z\ne x_{\alpha,n}$
		\end{enumerate}
		the induced topology by $\overrightarrow{d}$  which is generated by subbasis $$S_{\overrightarrow{d}}=\{V_{1}(x_{\alpha,n})\;|\;\alpha\in I\;,\;n\in O\}\cup \{V_{1}(x_{\alpha,n})\;|\;\alpha\in I\;,\;n\in E\}\cup\{V_{2}(p)\;|\;p\in X\}$$ $$\;\;\;\;\;\;=\{X-\{x_{\alpha,n+1}\}\;|\;n\in O\;,\;\alpha\in I\}\cup \{X-\{x_{\alpha,n-1}\}\;|\;n\in E\;,\;\alpha\in I\}\cup\{X\},$$ 
		in which E is the even natural numbers. Thus $\tau=\tau_{\overrightarrow{d}}$.
		
	\end{ex}
	\begin{ex}
		Let $\tau$ be K-topology on $\Bbb{R}$,that means the topology generated by the basis $\{(a,b)\;|\;a,b\in \Bbb{R}\}\cup \{(a,b)-\{\frac{1}{n}\}_{n\in \Bbb{N}}\;|\;a,b\in \Bbb{R}\}$, then we define R.O-metric $\overrightarrow{d}$ as follows:
		\begin{enumerate}
			\item$\forall x\in X;\;\overrightarrow{d}(x,x):=0,$\item$\forall x\notin \{\frac{1}{n}\}_{n\in {\Bbb{N}}},\;\overrightarrow{d}(x,\frac{1}{n}):=\mid{x-\frac{1}{n}}\mid+1,$\item$\; Otherwise\; \overrightarrow{d}(x,y):=|x-y|.$
		\end{enumerate}
		Then it is easy to check that
		$$S_{\overrightarrow{d}}=\{V_{r}(x)\;|\;x\notin \{\frac{1}{n}\}_{n\in {\Bbb{N}}}\;,\;0< r\le 1\}\cup \{V_{r}(x)\;|\;x\notin \{\frac{1}{n}\}_{n\in {\Bbb{N}}}\;,\;r>1\}$$$$\cup\{V_r(\frac{1}{n})\;|\;n\in\Bbb{N}\;,\;r>0\}=\{(x-r,x+r)-\{\frac{1}{n}\}_{n\in {\Bbb{N}}}\;|\;0<r\le 1\}$$$$\cup\{(x-r,x+r)\;|\;r>1\}\cup\{(\frac{1}{n}-r,\frac{1}{n}+r)\;|\;n\in\Bbb{N}\;,\;r>0\},$$
		generates topology of K-topology on $\Bbb{R}$.
	\end{ex}
	\begin{ex}
		For lower limit topology $\Bbb{R}_{l}$, which is a topology on $\Bbb{R}$ that has a basis as $\{[a,b)\;|\;a,b\in \Bbb{R}\}$, define R.O-metric $\overrightarrow{d}$ as follows:
		\begin{enumerate}
			\item$\forall a\in\Bbb{R};\;\overrightarrow{d}(a,a)=0,$\item$\overrightarrow{d}(a,b)=\begin{cases}a-b+1&b<a\\b-a&a\le b\end{cases}.$
		\end{enumerate}
		Then $$S_{\overrightarrow{d}}=\{V_r(a)\;|\;0<r\le 1\;,\;a\in\Bbb{R}\}\cup\{V_r(a)\;|\;r>1\;,\;a\in\Bbb{R}\}$$$$=\{[a,a+r)\;|\;0<r\le 1\;,\;a\in\Bbb{R}\}\cup\{(a-r,a+r)\;|\;r>1\;,\;a\in\Bbb{R}\}.$$
		Simply we can see $\tau_{\overrightarrow{d}}$ is the lower limit topology.
	\end{ex}
	Now, we give an example which shows that there can be fined R.O-metric spaces that are NOT qusi-metrizable, pseudo-metrizable and NOT quasi-pseudo-metrizable.
	\begin{ex}
		Suppose $(X,\tau_c)$ be a cofinite topological space and $Card(X)\ge Card(\Bbb{R})$. By Example \ref{13}, $(X,\tau_c)$ is R.O-metrizable. Now we prove that $(X,\tau_c)$ is not quasi-pseudo-metrizable. If it is quasi-pseudo-metrizable, then there exists a quasi-pseudo-metric $d$, such that $\tau_d=\tau_c$, thus $B=\{V_r(x)\;|\;r>0\;,\;x\in X\}$ is a basis, and for each $x\in X$ and $U$ open set containing $x$, there exists $t>0$ such that $V_t(x)\subseteq U$. Thus $B_x=\{V_r(x)\;|\;r>0\}$ is a local base at $x$, since $V_r(x)^{c}$ is finite, thus $B_x$ is at most countable. Therefore $(X,\tau_c)$ is first countable and it is a contradiction, because $Card(X)\ge Card(\Bbb{R})$ and cofinite topological spaces like $(X,\tau)$ with $Card(X)>Card(\Bbb{N})$ are not first countable $($see \cite{simmons1963introduction}$)$. Since quasi-metrizable space is quasi-pseudo-metrizable, so $(X,\tau_c)$ is not quasi-metrizable. Also if $(X,\tau_c)$ is pseudo-metrizable, then $B$ is a basis for this topology. In addition, for each $x\in X$ and $U$ open set containing $x$, there exist $t>0$ such that $V_t(x)\subseteq U$ and by the same procedure as above, it causes a contradiction.
	\end{ex}
	\begin{p}
		Suppose $(X,\overrightarrow{d})$ is a R.O-metric space and for each $x,y\in X$, define $\overrightarrow{\bar{d}}(x,y)=\frac{\overrightarrow{d}(x,y)}{\overrightarrow{d}(x,y)+1}$. Then $\tau_{\overrightarrow{d}}=\tau_{\overrightarrow{\bar{d}}}$.
	\end{p}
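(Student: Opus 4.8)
The plan is to reduce everything to elementary properties of the real function $f(t)=\dfrac{t}{t+1}$ on $[0,\infty)$, so that $\overrightarrow{\bar{d}}=f\circ\overrightarrow{d}$. First I would record the facts I need about $f$: it is a strictly increasing bijection of $[0,\infty)$ onto $[0,1)$ with $f(0)=0$, its inverse on $[0,1)$ is $f^{-1}(s)=\dfrac{s}{1-s}$, and it is subadditive, i.e. $f(a+b)\le f(a)+f(b)$ for all $a,b\ge 0$ (clear denominators, or use concavity of $f$ together with $f(0)\ge 0$).

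Next I would check that $\overrightarrow{\bar{d}}$ is itself a R.O-metric, so that $\tau_{\overrightarrow{\bar{d}}}$ is defined. Condition (1) is immediate since $\overrightarrow{\bar{d}}(x,x)=f(\overrightarrow{d}(x,x))=f(0)=0$. For condition (2), suppose $\overrightarrow{\bar{d}}(x,z)+\overrightarrow{\bar{d}}(z,y)\neq 0$; since $f(t)=0\iff t=0$, this forces $\overrightarrow{d}(x,z)+\overrightarrow{d}(z,y)\neq 0$, so the R.O-metric inequality for $\overrightarrow{d}$ gives $\overrightarrow{d}(x,y)\le \overrightarrow{d}(x,z)+\overrightarrow{d}(z,y)$; applying monotonicity and then subadditivity of $f$ yields $\overrightarrow{\bar{d}}(x,y)=f(\overrightarrow{d}(x,y))\le f(\overrightarrow{d}(x,z)+\overrightarrow{d}(z,y))\le f(\overrightarrow{d}(x,z))+f(\overrightarrow{d}(z,y))=\overrightarrow{\bar{d}}(x,z)+\overrightarrow{\bar{d}}(z,y)$.

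The heart of the argument is then to compare the two subbases $S_{\overrightarrow{d}}$ and $S_{\overrightarrow{\bar{d}}}$ directly. Fix $p\in X$ and write $V_r^{\overrightarrow{d}}(p)$, $V_s^{\overrightarrow{\bar{d}}}(p)$ for the corresponding balls. Because $f$ is a strictly increasing bijection onto $[0,1)$: for any $r>0$ one has $\overrightarrow{d}(p,x)<r\iff f(\overrightarrow{d}(p,x))<f(r)$, hence $V_r^{\overrightarrow{d}}(p)=V_{f(r)}^{\overrightarrow{\bar{d}}}(p)$ with $f(r)\in(0,1)$; for $0<s<1$ one has $f(\overrightarrow{d}(p,x))<s\iff \overrightarrow{d}(p,x)<f^{-1}(s)$, hence $V_s^{\overrightarrow{\bar{d}}}(p)=V_{f^{-1}(s)}^{\overrightarrow{d}}(p)$; and for $s\ge 1$ every point satisfies $\overrightarrow{\bar{d}}(p,x)=f(\overrightarrow{d}(p,x))<1\le s$, so $V_s^{\overrightarrow{\bar{d}}}(p)=X$. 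Consequently $S_{\overrightarrow{d}}\subseteq S_{\overrightarrow{\bar{d}}}$ and $S_{\overrightarrow{\bar{d}}}\subseteq S_{\overrightarrow{d}}\cup\{X\}$. Since $X$ lies in every topology (it is the empty intersection of subbasic sets), the two subbases generate the same topology, i.e. $\tau_{\overrightarrow{d}}=\tau_{\overrightarrow{\bar{d}}}$.

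As for difficulty, there is essentially no serious obstacle here. The only two points needing a moment's care are the subadditivity of $f$, which is exactly what preserves the R.O-metric inequality and makes $\tau_{\overrightarrow{\bar{d}}}$ well-defined, and the bookkeeping for radii $s\ge 1$, where $\overrightarrow{\bar{d}}$-balls saturate to $X$; routing that last case through the remark that $X$ is automatically open keeps the comparison of subbases clean and avoids any boundedness assumption on $\overrightarrow{d}$.
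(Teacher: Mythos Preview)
Your proposal is correct and follows essentially the same route as the paper: verify that $\overrightarrow{\bar{d}}$ is a R.O-metric, then show that each $\overrightarrow{d}$-ball equals a $\overrightarrow{\bar{d}}$-ball and vice versa, concluding that the two subbases generate the same topology. Your packaging via the function $f(t)=t/(t+1)$ is a bit tidier, and you are more careful than the paper on two minor points: you explicitly transfer the hypothesis $\overrightarrow{\bar{d}}(x,z)+\overrightarrow{\bar{d}}(z,y)\neq 0$ back to $\overrightarrow{d}$ before invoking the R.O-inequality, and you handle the radii $s\ge 1$ (where $\overrightarrow{\bar{d}}$-balls become all of $X$) rather than silently restricting to $s<1$.
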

	\begin{proof}
		Obviously condition (1) in the definition of R.O-metric holds. Now for all $x,y\in X$, if $\overrightarrow{d}(x,y)\le \overrightarrow{d}(x,z)+\overrightarrow{d}(z,y)$, then $$\frac{1}{\overrightarrow{d}(x,z)+\overrightarrow{d}(z,y)+1}\le \frac{1}{\overrightarrow{d}(x,y)+1}$$ $$\Rightarrow 1-\frac{1}{\overrightarrow{d}(x,y)+1}\le 1-\frac{1}{\overrightarrow{d}(x,z)+\overrightarrow{d}(z,y)+1}=\frac{\overrightarrow{d}(x,z)+\overrightarrow{d}(z,y)}{\overrightarrow{d}(x,z)+\overrightarrow{d}(z,y)+1}$$ $$\Rightarrow \frac{\overrightarrow{d}(x,y)}{\overrightarrow{d}(x,y)+1}\le \frac{\overrightarrow{d}(x,z)}{\overrightarrow{d}(x,z)+1}+\frac{\overrightarrow{d}(z,y)}{\overrightarrow{d}(z,y)+1}.$$
		Therefore $\overrightarrow{\bar{d}}$ is R.O-metric on $X$. To prove $\tau_{\overrightarrow{d}}=\tau_{\overrightarrow{\bar{d}}}$, assume $V_r(x)\in  S_{\overrightarrow{d}}$ and $z\in V_r(x)$, so by definition $\overrightarrow{d}(x,z)<r$. If $\overrightarrow{d}(x,z)\ne 0$, then $$\overrightarrow{\bar{d}}(x,z)<\frac{r}{r+1}\Rightarrow z\in \bar{V}_{\frac{r}{r+1}}(x)\in S_{\overrightarrow{\bar{d}}},$$in which $\bar{V}_{\frac{r}{r+1}}(x)$ is a $r$-ball with respect to $\overrightarrow{\bar{d}}$, and if $$\overrightarrow{d}(x,z)=0,\;then\; \overrightarrow{\bar{d}}(x,z)=0<\frac{r}{r+1},$$thus $z\in\bar{V}_{\frac{r}{r+1}}(x)$ and we have $V_r(x)\subseteq \bar{V}_{\frac{r}{r+1}}(x).$ It is easy to check that $\bar{V}_{\frac{r}{r+1}}(x)\subseteq V_r(x),$ for all $x\in X$ and all non negative real numbers. Thus   $$S_{\overrightarrow{d}}\subseteq S_{\overrightarrow{\bar{d}}}\;\;\;(*).$$
		Now let $\bar{V}_r(x)\in S_{\overrightarrow{\bar{d}}}$ and $z\in \bar{V}_r(x),\;r<1$, then $\overrightarrow{\bar{d}}(x,z)<r$. If $\overrightarrow{\bar{d}}(x,z)\ne 0$, then $\overrightarrow{d}(x,z)<\frac{r}{1-r}$, and if $\overrightarrow{\bar{d}}(x,z)=0$, then $\overrightarrow{d}(x,z)=0<\frac{r}{1-r}$, thus $z\in V_{\frac{r}{1-r}}(x)\in S_{\overrightarrow{d}}$, that implies $\bar{V}_r(x)\subseteq V_{\frac{r}{1-r}}(x)$. It is easy to check that $V_{\frac{r}{1-r}}(x)\subseteq \bar{V}_r(x)$, thus we get $ S_{\overrightarrow{\bar{d}}}\subseteq S_{\overrightarrow{d}}$, and by virtue of $(*)$, $S_{\overrightarrow{\bar{d}}}= S_{\overrightarrow{d}}$, which implies $\tau_{\overrightarrow{d}}=\tau_{\overrightarrow{\bar{d}}}.$
	\end{proof}
	\begin{n}\label{1}
		It is well-known that every finite topological space $(X,\tau)$ has a subbasis $S$ such that $Card(S)\le Card(X)$, since for every point $p$ in $X$ there is the smallest open set with respect to $(\subseteq)$ containing $p$ and  the set of these open sets is a subbasis $S$ for $(X,\tau)$ , obviously $Card(S)\le Card(X)$. In the following example we show that this property does not necessarily hold for infinite topological spaces.
	\end{n}
	\begin{ex} \label{4}
		Let $Y=\Bbb{N}\times\Bbb{N}$ , $p$ be a point not in $Y$ and $X=\{p\}\cup Y$. For each function $ f:\Bbb{N}\to\Bbb{N}$, let
		$$B_f:=\{p\}\cup\{(k,\ell)\in Y\;|\;\ell\ge f(k)\}\;.$$
		Topologize $X$ by making each point of $Y$ isolated and taking $\left\{B_f:f\in{\Bbb{N}^{\Bbb{N}}}\right\}$ as a local subbasis at $p$. 
		We show that $X$ has no countable subbase.
		Let $S=\left\{B_f:f\in{\Bbb{N}^{\Bbb{N}}}\right\}\cup\{(n,m)\;|\;n,m\in\Bbb{N}\}$ and
		$$\mathscr{B}=\left\{\bigcap\mathscr{F}:\mathscr{F}\subseteq S\text{ and }\mathscr{F}\text{ is finite}\right\}\;.$$
		Thus $\mathscr{B}$ is the base generated by the subbasis $S$. $S$ is infinite and has $|S|$ finite subsets, and therefore $|\mathscr{B}|=|S|$. If $S$ is countable, $\mathscr{B}$ is also countable, and $X$ is second countable and hence first countable. But we show that there is no countable local base at $p$.
		Suppose that $\mathscr{U}=\{U_n:n\in\Bbb N\}$ is a countable family of open neighbourhoods of $p$. For each $n\in\Bbb{N}$ there are $f_{n_1},f_{n_2},\ldots,f_{n_{m_n}}\in \Bbb {N}^{\Bbb{ N}}$ such that $\bigcap_{n_1\le i\le n_{m_n}} B_{f_i}\subseteq U_n$. Define
		$$g:\Bbb N\to\Bbb N:k\mapsto 1+\max\{f_{k_i}(\ell):\ell\le k,\;1\le i\le m_k\}\;;$$
		then $B_g\nsupseteq B_{f_{n_i}}$, because it is evident that $(k,f_{k_i}(k))\in B_{f_{k_i}}$, but $(k,f_{k_i}(k))\notin B_g$ . Therefore for all $n\in \Bbb{N}$, $B_g\nsupseteq U_n$, so $\mathscr{U}$ is not a local base at $p$.
	\end{ex}
	\begin{n}
		If $X$ is infinite and $\overrightarrow{d}$ is a R.O-metric on $X$, by definition of R.O-metric and  $S_{\overrightarrow{d}}$, we can see that $Card(S_{\overrightarrow{d}})\le Card(X)$.
		Since for every $p$ in $X$, $Card(\{V_{r}(p)\;|\;r\in \Bbb{R}^{+}\})\le Card(X)$ , 
		hence $Card(S_{\overrightarrow{d}})=Card(\bigcup_{p\in X}\{V_{r}(p)\;|\;r\in \Bbb{R}^{+}\})\le Card{X}.$ This shows that the topological space in Example \ref{4} is not R.O-metrizable.
	\end{n}
	\begin{p}
		Let $(X,\tau)$ be a topological space. If it has a subbasis $S$ such that $|S|\le |X|$, and there is a function $f:X \to S$ such that  $x\in f(x)$, for every $x\in X$, then $(X,\tau)$ is R.O-metrizable.
	\end{p}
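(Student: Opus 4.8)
The plan is to realize $\tau$ as the topology generated by the balls of a particularly simple, two-valued R.O-metric, after first repackaging the hypothesis. What I really want is a subbasis $S'$ for $\tau$ with $\bigcup S'=X$, $|S'|\le|X|$, together with a \emph{surjective} map $g\colon X\to S'$ such that $x\in g(x)$ for every $x\in X$. Granting this, define
\[
\overrightarrow{d}(x,y)=\begin{cases}0,& y\in g(x),\\ 1,& y\notin g(x).\end{cases}
\]
Then $\overrightarrow{d}(x,x)=0$ since $x\in g(x)$, and condition (2) of the definition of an R.O-metric is automatic: if $\overrightarrow{d}(x,z)+\overrightarrow{d}(z,y)\ne 0$ then that sum is $\ge 1\ge\overrightarrow{d}(x,y)$. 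One computes $V_r(x)=g(x)$ for $0<r\le 1$ and $V_r(x)=X$ for $r>1$, so $S_{\overrightarrow{d}}=\{g(x):x\in X\}\cup\{X\}=S'\cup\{X\}$ because $g$ is onto $S'$. Every member of $S'$ is open and $S'$ generates $\tau$, so the topology $\tau_{\overrightarrow{d}}$ generated by the subbasis $S_{\overrightarrow{d}}$ is exactly $\tau$.

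Next I would note that finding such a pair $(S',g)$ is the same as finding a subbasis $S'$ for $\tau$ with $\bigcup S'=X$ and $|S'|\le|X|$ that admits an injective system of representatives, i.e.\ an injection $c\colon S'\to X$ with $c(U)\in U$ for all $U\in S'$: from $c$ one sets $g(c(U))=U$ and lets $g$ send each remaining point $x$ to any member of $S'$ containing it (which exists since $\bigcup S'=X$), and conversely $c$ can be read off from the fibres of a surjective $g$. When $X$ is finite this is immediate: take $S'=\{M_x:x\in X\}$, where $M_x=\bigcap\{U\in\tau:x\in U\}$ is the least open set containing $x$ (cf.\ Note~\ref{1}); then $|S'|\le|X|$, $x\in M_x$ gives $\bigcup S'=X$, and picking one $x$ with $M_x=M$ for each distinct value $M$ yields an injective $c$.

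The substantive case, and where I expect the real difficulty, is infinite $X$. The given subbasis $S$ itself may fail to admit an injective system of representatives -- an ``overcrowded'' finite subfamily such as $\{\{a\},\{b\},\{a,b\}\}\subseteq S$ blocks injectivity even when $|S|\le|X|$ -- so one cannot simply take $S'=S$. Instead I would build $S'$ by transfinite recursion along a well-ordering $\langle x_\xi\rangle_{\xi<|X|}$ of $X$: at stage $\xi$, using the subbasic sets through $x_\xi$, decide whether to designate $x_\xi$ as the representative of a new open set adjoined to $S'$, maintaining the invariants that $|S'|\le|X|$, that $S'$ still covers $X$, and that what has been adjoined still suffices to generate $\tau$. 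Showing that this process can always be pushed through -- equivalently, that a Hall-type selection condition holds once $S$ has been trimmed to an economical generating family, and this is exactly where $|S|\le|X|$ is used -- is the technical heart of the proof. Once $(S',g)$ is produced, the construction of the first paragraph finishes the argument.
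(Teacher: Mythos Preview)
Your two-valued construction --- $\overrightarrow d(x,y)=0$ if $y\in g(x)$ and $1$ otherwise, so that $V_r(x)=g(x)$ for $0<r\le 1$ and $V_r(x)=X$ for $r>1$ --- is exactly what the paper does. The difference is that the paper applies it to the \emph{given} $f$ with no preparation at all: it simply writes ``By the hypothesis $S=\{f(x):x\in X\}$'' and concludes $S_{\overrightarrow d}=S$, in effect treating $f$ as a surjection onto $S$. All of your repackaging --- passing to a new subbasis $S'$, recasting the problem as an injective system of representatives, the finite case via minimal neighbourhoods, and the proposed transfinite recursion for infinite $X$ --- is absent from the paper's proof.

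You have correctly observed that surjectivity of $f$ is not part of the stated hypothesis, and your $\{\{a\},\{b\},\{a,b\}\}$ example shows one cannot always repair this while keeping $S'=S$. But the remedy you outline is not carried out: you name the ``technical heart'' (a Hall-type selection along a well-ordering of $X$) and then stop. Since that step is precisely where your argument would go beyond the paper's one-line assertion, what you have written is the paper's construction together with a correct diagnosis of a gap and a sketch of a strategy, not yet a proof. Either actually build $(S',g)$ in the infinite case, or note that under the reading the paper evidently intends --- $f$ onto $S$, so that $S=f(X)$ --- the short argument already suffices and your detour is unnecessary.
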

	\begin{proof}
		By the hypothesis $S=\{f(x)\;|\;x\in X\}$. For every $x,y\in X$ define $$\overrightarrow{d}(x,y):=\begin{cases} 0&y\in f(x) \\1&otherwise \end{cases}$$
		It is easy to check that $S_{\overrightarrow{d}}=S$, hence $\tau_{\overrightarrow{d}}=\tau.$
	\end{proof}
	\begin{co}
		Every finite topological space is R.O-metrizable since by Note \ref{1} we can define $f:X\to S$ such that $f(x)$ is the smallest open set containing $x$.
	\end{co}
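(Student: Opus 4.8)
The plan is to deduce the corollary directly from the proposition immediately preceding it, by checking that every finite topological space satisfies its hypotheses. First I would record the standard fact (this is exactly the content of Note \ref{1}) that in a finite space $(X,\tau)$ every point $p$ has a smallest open neighbourhood: the set $U_p:=\bigcap\{U\in\tau\;|\;p\in U\}$ is a \emph{finite} intersection of open sets, hence open, and by construction $U_p\subseteq U$ for every open $U$ containing $p$; so $U_p$ is the minimum, with respect to $\subseteq$, among the open sets containing $p$.

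Next I would set $S:=\{U_p\;|\;p\in X\}$ and verify that $S$ is a subbasis for $\tau$ with $|S|\le|X|$. In fact $S$ is a basis: for $U\in\tau$ we have $U=\bigcup_{p\in U}U_p$, since $U_p\subseteq U$ for each $p\in U$ while $p\in U_p$; and $|S|\le|X|$ because $S$ is the image of $X$ under $p\mapsto U_p$. Now define $f:X\to S$ by $f(p):=U_p$; then $p\in f(p)$ for every $p\in X$. Thus all the hypotheses of the previous proposition are met, and it yields that $(X,\tau)$ is R.O-metrizable. Unwinding that proposition, the witnessing R.O-metric is $\overrightarrow{d}(x,y)=0$ if $y\in U_x$ and $\overrightarrow{d}(x,y)=1$ otherwise, and one checks $S_{\overrightarrow{d}}=S$, hence $\tau_{\overrightarrow{d}}=\tau$.

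There is essentially no obstacle here: the only step that genuinely uses an argument is the existence of the minimal open neighbourhoods $U_p$, and that is precisely where finiteness of $X$ enters — for an infinite space an intersection of infinitely many open sets need not be open, as illustrated by Example \ref{4}. Everything else is bookkeeping, and since the needed subbasis and the map $f$ are already supplied by Note \ref{1}, the corollary is immediate from the proposition.
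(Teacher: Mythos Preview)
Your proposal is correct and follows exactly the route indicated by the paper: use Note~\ref{1} to obtain the smallest open neighbourhoods $U_p$, set $f(p)=U_p$, and invoke the preceding proposition. The paper gives no proof beyond the one-line justification embedded in the corollary statement, so your write-up is simply a fuller unpacking of that same argument.
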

 
\begin{cun}
	Let $(X,\tau)$ and $S$ be a subbasis of it, such that $Card\;(S)\;\le Card\;(X)$, then $(X,\tau)$ is a R.O-metrizable.
	\end{cun}

Now we mention three lemmas that will be useful for the last section.
\begin{lem}\label{2}
	Let $(X,\tau)$ be finite $T_0$-topological space, then there exists $a\in X$ such that $\{a\}\in\tau$.
\end{lem}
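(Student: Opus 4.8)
The plan is to exploit finiteness twice: first to produce, for each point, a smallest open neighbourhood, and then to pick one such neighbourhood that is inclusion-minimal; the $T_0$ axiom will force that minimal neighbourhood to be a singleton.

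First I would, for every $x\in X$, set $U_x:=\bigcap\{U\in\tau\;:\;x\in U\}$. Since $X$ is finite, $\tau$ is finite, so this is a finite intersection of open sets and hence $U_x\in\tau$; moreover $x\in U_x$ and $U_x$ is contained in every open set containing $x$, i.e. $U_x$ is the smallest open neighbourhood of $x$. Next I would choose, among the finite family $\{U_x\;:\;x\in X\}$, a member $U_a$ that is minimal with respect to set inclusion (possible again because the family is finite). I claim $U_a=\{a\}$. Indeed, take any $b\in U_a$. Then $U_a$ is an open set containing $b$, so by minimality of $U_b$ as a neighbourhood of $b$ we get $U_b\subseteq U_a$; by the inclusion-minimality of $U_a$ in the family this forces $U_b=U_a$. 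Hence $a\in U_b$ and $b\in U_a$. Now apply the $T_0$ axiom to $a$ and $b$: if $a\neq b$, there is an open set $W$ containing exactly one of them. If $a\in W$ and $b\notin W$, then $U_a\subseteq W$, whence $b\in U_a\subseteq W$, a contradiction; symmetrically, if $b\in W$ and $a\notin W$, then $U_b\subseteq W$, whence $a\in U_b=U_a\subseteq W$, again a contradiction. Therefore $a=b$, so $U_a=\{a\}$, and since $U_a\in\tau$ this is the desired open singleton.

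I do not expect any genuine obstacle here; the only point requiring care is the realisation that one must pass to an inclusion-minimal smallest-neighbourhood rather than work with an arbitrary point, and that the $T_0$ hypothesis is exactly what rules out $U_a$ having more than one element. The finiteness of $X$ is essential, since it guarantees both the existence of each $U_x$ and the existence of an inclusion-minimal member of $\{U_x\;:\;x\in X\}$; without it a $T_0$ space may have no open points at all (for instance $\mathbb{R}$ with its usual topology).
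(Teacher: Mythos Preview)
Your proof is correct and follows essentially the same idea as the paper: use finiteness to obtain an inclusion-minimal nonempty open set, then invoke $T_0$ to force it to be a singleton. The only difference is cosmetic: the paper works directly with a minimal nonempty member of $\tau$ and, assuming it contains two points $a,b$, uses $T_0$ to produce an open $V$ with $a\in V$, $b\notin V$, so that $U\cap V$ is a strictly smaller nonempty open set---contradiction. You instead restrict attention to the family $\{U_x:x\in X\}$ of smallest neighbourhoods and argue via $U_b=U_a$; this works, but the detour through smallest neighbourhoods is not needed, since any minimal nonempty open set is automatically of the form $U_x$ for each of its points.
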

\begin{proof}
	Suppose that $U\in\tau$ be a minimal open set by relation $\subseteq$. If $Card\;(U)>1$, then there exists at least two points $a,b\in U$ and since $(X,\tau)$ is $T_0$, there exists $V\in\tau$ such that $a\in V,\;b\notin V$. Therefore $\emptyset\ne U\cap V\in\tau$ and $Card\;(U\cap V)<Card\;(U)$, this is a contradiction with minimality of $U$.
\end{proof}
\begin{lem}\label{15}
	Let $(X,\tau)$ be a topological space and $(\sim)$ be a relation on $X$ by :$$x\sim y\Leftrightarrow \forall U\in\tau,\;(x\in U\Leftrightarrow y\in U).$$
	Then $(\sim)$ is an equivalence relation on $X$ and $\tau^{'}=\{\;[U]\;|\;U\in \tau\}$ is a $T_0$-topology on $\mathcal{A}_{\tau}=\{\;[x]\;|\;x\in X\}$.
\end{lem}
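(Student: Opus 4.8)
The plan is to verify the three assertions in turn: that $\sim$ is an equivalence relation, that $\tau'=\{[U]\mid U\in\tau\}$ is a topology on $\mathcal{A}_\tau$ (where $[U]:=\{[x]\mid x\in U\}$), and that this topology is $T_0$. The reflexivity, symmetry and transitivity of $\sim$ are immediate, since for each fixed $U$ the biconditional $x\in U\Leftrightarrow y\in U$ is itself reflexive, symmetric and transitive in the pair $(x,y)$; I would dispose of this in one line.

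The pivotal observation, which I expect to do the real work, is that every $U\in\tau$ is a union of $\sim$-classes: if $x\in U$ and $y\sim x$, then by the very definition of $\sim$ we have $y\in U$, hence $[x]\subseteq U$ whenever $[x]\cap U\neq\emptyset$. From this I extract the key identity $[x]\in[U]\iff x\in U$ for every $U\in\tau$: the implication $\Leftarrow$ is trivial, and $\Rightarrow$ holds because $[x]=[y]$ for some $y\in U$ forces $x\sim y$ and hence $x\in U$. (In particular distinct open sets may give the same $[U]$, but that is harmless since $\tau'$ is just the image of $\tau$ under $U\mapsto[U]$.)

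With that identity in hand the topology axioms for $\tau'$ reduce to set identities about $\tau$. We have $[\emptyset]=\emptyset$ and $[X]=\mathcal{A}_\tau$, so both trivial sets lie in $\tau'$. For closure under arbitrary unions and finite intersections it suffices to check $[\bigcup_\alpha U_\alpha]=\bigcup_\alpha[U_\alpha]$ and $[U\cap V]=[U]\cap[V]$, each obtained by running the equivalence $[x]\in[U]\iff x\in U$ through both sides; since $\tau$ is closed under these operations, so is $\tau'$. Finally, for the $T_0$ property take $[x]\neq[y]$ in $\mathcal{A}_\tau$, so $x\not\sim y$; negating the defining biconditional yields a $U\in\tau$ containing exactly one of $x,y$, say $x\in U$ and $y\notin U$. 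Then $[U]\in\tau'$ with $[x]\in[U]$ and, by the key identity, $[y]\notin[U]$, which is precisely the required separation.

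The only step requiring care is the saturation remark and the resulting identity $[x]\in[U]\iff x\in U$; once that is isolated as a standalone claim at the start of the proof, every other step is a routine unwinding of definitions, and I would simply invoke the claim repeatedly.
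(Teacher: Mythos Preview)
Your proposal is correct and follows essentially the same route as the paper: both dispose of the equivalence relation quickly, reduce the topology axioms to the identities $[U]\cap[V]=[U\cap V]$ and $\bigcup_\alpha[U_\alpha]=[\bigcup_\alpha U_\alpha]$, and handle $T_0$ by pulling back a separating open set. Your version is cleaner in that you isolate the saturation fact $[x]\in[U]\iff x\in U$ as an explicit lemma (the paper uses it implicitly), and the paper phrases the $T_0$ step via closures ($\overline{[x]}\neq\overline{[y]}$) rather than direct separation, but these are cosmetic differences.
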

\begin{proof}
	Obviuosely $(\sim)$ is an equivalence relation on $X$. Suppose $[z]\in [U]\cap [V]$, in which $[U],[V]\in \tau^{'}$, then iff $[z]\in [U\cap V]$. Thus $[U]\cap [V]=[U\cap V]$. Also if $[z]\in\bigcup_{\alpha\in I}[U_{\alpha}]$, then iff $[z]\in [\bigcup_{\alpha\in I}U_{\alpha}]$. Thus $\bigcup_{\alpha\in I}[U_{\alpha}]=[\bigcup_{\alpha\in I}U_{\alpha}]$. Therefore $\tau^{'}$ is a topology on $\mathcal{A}_{\tau}$. Assume $[x]\ne [y]$, thus without losing the quality, there exist $U\in \tau$ such that $x\in U$ and $y\notin U$. Thus $[x]\in [U]$ and $\bar{[y]}\subseteq [U]^c$, so $\bar{[x]}\ne \bar{[y]}$. Therefore $\tau^{'}$ is $T_0$-topology on $\mathcal{A}_{\tau}$.
\end{proof}

\begin{lem}\label{3}
	Let $(X,\tau)$ be a topological space. If $(\mathcal{A}_{\tau},\tau^{'})$ is a R.O-metrizable, then $(X,\tau)$ is R.O-metrizable.
\end{lem}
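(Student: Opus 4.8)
The plan is to pull a witnessing R.O-metric back from the $T_0$-quotient $\mathcal{A}_{\tau}$ to $X$ along the quotient map. Fix a R.O-metric $\overrightarrow{\rho}$ on $\mathcal{A}_{\tau}$ with $\tau_{\overrightarrow{\rho}}=\tau'$, let $q:X\to\mathcal{A}_{\tau}$ be the map $q(x)=[x]$ (which is onto by the definition of $\mathcal{A}_{\tau}$ in Lemma \ref{15}), and define
$$\overrightarrow{d}(x,y):=\overrightarrow{\rho}\bigl([x],[y]\bigr)\qquad(x,y\in X).$$

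First I would verify that $\overrightarrow{d}$ is a R.O-metric on $X$. Condition (1) holds because $\overrightarrow{d}(x,x)=\overrightarrow{\rho}([x],[x])=0$. Condition (2) is inherited verbatim: since $\overrightarrow{d}(x,z)+\overrightarrow{d}(z,y)=\overrightarrow{\rho}([x],[z])+\overrightarrow{\rho}([z],[y])$, whenever this sum is nonzero the triangle-type inequality for $\overrightarrow{\rho}$ applied to the classes $[x],[y],[z]$ gives $\overrightarrow{d}(x,y)\le\overrightarrow{d}(x,z)+\overrightarrow{d}(z,y)$.

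Next comes the only step with real content: the identity $q^{-1}([U])=U$ for every $U\in\tau$. Indeed, if $[x]\in[U]$ then $x\sim u$ for some $u\in U$, and by the very definition of $\sim$ in Lemma \ref{15}, from $u\in U$ (an open set) we get $x\in U$; the reverse inclusion $U\subseteq q^{-1}([U])$ is immediate. Consequently $U\mapsto[U]$ is a bijection $\tau\to\tau'$, and $q^{-1}$ is its inverse on $\tau'$; since preimages commute with arbitrary unions and finite intersections, $q^{-1}$ carries the frame $\tau'$ bijectively onto the frame $\tau$. Now observe that for every $p\in X$ and $r>0$,
$$V_r^{\overrightarrow{d}}(p)=\{x\in X:\overrightarrow{\rho}([p],[x])<r\}=q^{-1}\bigl(V_r^{\overrightarrow{\rho}}([p])\bigr),$$
so $S_{\overrightarrow{d}}=\{\,q^{-1}(W):W\in S_{\overrightarrow{\rho}}\,\}$.

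Finally I would assemble the pieces. A finite intersection of elements of $S_{\overrightarrow{d}}$ equals $q^{-1}$ of the corresponding finite intersection of elements of $S_{\overrightarrow{\rho}}$, and an arbitrary union of such sets equals $q^{-1}$ of the corresponding union; since $S_{\overrightarrow{\rho}}$ generates $\tau_{\overrightarrow{\rho}}=\tau'$, the topology generated by $S_{\overrightarrow{d}}$ is exactly $\{\,q^{-1}(O):O\in\tau'\,\}=\{\,q^{-1}([U]):U\in\tau\,\}=\tau$. Hence $\tau_{\overrightarrow{d}}=\tau$ and $(X,\tau)$ is R.O-metrizable. I do not expect a genuine obstacle here: the whole argument hinges on $q^{-1}([U])=U$, which is precisely why one collapses along $\sim$ (gluing points with identical neighbourhood filters) rather than any coarser identification; everything else is routine bookkeeping about transporting a subbasis along a surjection whose preimage map is a frame isomorphism onto $\tau$.
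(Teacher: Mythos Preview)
Your proof is correct and follows essentially the same route as the paper: both pull back the R.O-metric from $\mathcal{A}_{\tau}$ to $X$ by setting the distance between $x$ and $y$ to be the distance between their classes (the paper's case split ``$0$ if $[x]=[y]$'' is redundant since $\overrightarrow{\rho}([x],[x])=0$ anyway), and then identify the generated topology with $\tau$ via the correspondence between balls in $X$ and preimages of balls in $\mathcal{A}_{\tau}$. Your formulation via the single identity $q^{-1}([U])=U$ and the frame-isomorphism language is cleaner than the paper's explicit unpacking of $[U]$ as a union of finite intersections of balls, but the content is the same.
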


\begin{proof}
	There exist R.O-metric $\overrightarrow{d}$ such that $\tau_{\overrightarrow{d}}=\tau^{'}$. Now define for all $x,y\in X$, $$\overrightarrow{D}(x,x)=0,\;\overrightarrow{D}(x,y)=\begin{cases}0&[x]=[y]\\\overrightarrow{d}([x],[y])&[x]\ne [y]\end{cases}.$$ Obviuosly $\overrightarrow{D}$ is a metric on $X$. Assume that $[U]=\bigcup_{\alpha\in I}(\cap_{i=1}^{n_{\alpha}}U_{r_i}([x_i]))$. We clame that $U=\bigcup_{\alpha\in I}(\cap_{i=1}^{n_{\alpha}}U_{r_i}(x_i))$. So suppose that $z\in U$. Since $U=\bigcup_{[x]\in [U]}[x]$, then there exist $\alpha_0\in I$ such that $[z]\in \bigcap_{i=1}^{n_{\alpha_0}}U_{r_i}([x_i])$. Hence for all $1\le i\le n_{\alpha_0}$, $[z]\in U_{r_i}([x_i])$ which means $\overrightarrow{d}([x_i],[z])<r_i$. By definition of $\overrightarrow{D}$, we have $\overrightarrow{D}(x_i,z)<r_i$, thus for all $1\le i\le n_{\alpha_0}$, $z\in U_{r_i}(x_i)$. Therefore $U\subseteq \bigcup_{\alpha\in I}(\cap_{i=1}^{n_{\alpha}}U_{r_i}(x_i))$. Checking  $\bigcup_{\alpha\in I}(\cap_{i=1}^{n_{\alpha}}U_{r_i}(x_i))\subseteq U$ is easy. So $U=\bigcup_{\alpha\in I}(\cap_{i=1}^{n_{\alpha}}U_{r_i}(x_i))$, thus $\tau_{\overrightarrow{D}}=\tau$.
\end{proof}
	
	\section{Generalized R.O-metric space}
	\begin{defn}
		Suppose $(X,\overrightarrow{d})$ is a R.O-metric space. $(X,\overrightarrow{d},\beta)$ is called a generalized R.O-metric space if and only if there exists a collection $\beta=\{f_{\alpha}:X\to X\;|\;\alpha \in I\}$ such that $Id_X \in \beta$ and $$\forall y\in V_{r,\alpha}(x),\;\exists t>0\;,\;\exists \eta\in I \;s.t\; y\in f_{\eta}(X)\;,\;V_{t,\eta}(y)\subseteq V_{r,\alpha}(x)$$
		where $V_{r,\alpha}(x)=\{f_{\alpha}(y)\in X\;|\;d(x,f_{\alpha}(y))<r\; ,\; y\in X\; ,\; x\in f_{\alpha}(X)\}$.
	\end{defn}
	\begin{n}
		If $(X,\overrightarrow{d},\beta)$ is a generalized R.O-metric space the set $S_{\overrightarrow{d,\beta}}=\{V_{r,\alpha}(x)\;|\;x\in X\; ,\; r>0\; ,\; x\in f_{\alpha}(X)\}$ is a basis of a topology on $X$.
		Topology generated by $S_{\overrightarrow{d},\beta}$ is denoted by $\tau_{(\overrightarrow{d},\beta)}$.
	\end{n}
	\begin{defn}
		Topological space $(X,\tau)$ is called generalized R.O-metrizable iff there exists $\overrightarrow{d}$ such that $\tau_{(\overrightarrow{d},\beta)}=\tau$.
	\end{defn}
	\begin{ex} \label{5}
		Let $X$ be the set from Example \ref{4} and $\tau$ be the topology on $X$ from the same example,for every $(m,n)$ and $(s,r)$ in $\Bbb{N}^{2}$ such that $(m,n)\ne (s,r)$ define  $$\overrightarrow{d}((m,n),(m,n))=0=\overrightarrow{d}(p,(m,n))=\overrightarrow{d}(p,p)$$
		$$\overrightarrow{d}((m,n),(s,r))=1=\overrightarrow{d}((m,n),p)$$ 
		And for every $f\in \Bbb{N}^{\Bbb{N}}$ define $F_{f}:X\to X$ as follows : $$F_{f}(p)=p \;,\; F_{f}((m,n))=G((m,n))$$ Where $G:\Bbb{N}^{2} \to B_f$ is a surjective function, let $\beta=\{F_{f}\;|\;f\in \Bbb{N}^{\Bbb{N}}\} \cup \{Id_X\} $.
		Now $$V_{(\frac{1}{2} ,f)}(p)$$ $$=\{F_{f}(a)\in X\;|\;\overrightarrow{d}(p,F_{f}(a))<\frac{1}{2} \}$$ $$=\{F_{f}(a) \;|\;a\in X \}=B_f $$ and  $$V_{(\frac{1}{2} ,f)}((m,n))$$ $$=\{F_{f}(a)\in X\;|\;\overrightarrow{d}((m,n),F_{f}(a))<\frac{1}{2} \}=\{(m,n)\}$$ But if $r>1$ , $x\in X$ and $f\in \beta$ then $V_{(r,f)}(x)=X$ therefore $\tau_{(\overrightarrow{d},\beta)}=\tau$. So $(X,\tau)$ is generalize R.O-metric space which is NOT R.O-metrizable space.
	\end{ex}
	\begin{ex} \label{6}
		Let $X$ be a infinite set, $p\in X$ be fixed and $B\subset X$ which is $Card\;(B)=Card\;(B^{c})=Card\;(X)$ containing $p$. Suppose $\tau=\{A\subset X\;|\;B\subset A\;,\;A\ne B \}\cup \{\{x\}\;|\;x\in X\;,\;x\ne p\}$ then $\tau$ is a topology on $X$. Let $f_{A}:X\to X$ be surjective function such that $f_{A}(p)=p$ and $f_{A}(x)\ne p \; \forall x\ne p $ and let $\beta=\{f_{A}\;|\;A\in \tau \}\cup \{Id_X\}$. Now For distinct $x$ and $y$ in $X$ such that $x,y\ne p$ define $$\overrightarrow{d}(x,y)=0=\overrightarrow{d}(p,x)=\overrightarrow{d}(p,p)$$ $$\overrightarrow{d}(x,y)=1=\overrightarrow{d}(x,p).$$ It is easy to check that $\tau_{(\overrightarrow{d},\beta)}=\tau$ thus $(X,\tau)$ is a generalized R.O-metrizable space.
	\end{ex}

	\begin{theorem}
		Every topological space $(X,\tau)$ is generalized R.O metrizable
	\end{theorem}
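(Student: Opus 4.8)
The plan is to exploit how much freedom the family $\beta$ carries: if the R.O-metric $\overrightarrow{d}$ is taken to be essentially trivial, then every generalized ball $V_{r,\alpha}(x)$ collapses onto the image $f_\alpha(X)$, so by letting the functions $f_\alpha$ have images running over a basis of $\tau$ one forces the induced basis $S_{\overrightarrow{d},\beta}$ to be a basis for $\tau$ itself.

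Concretely, I would proceed as follows. We may assume $X\neq\emptyset$, the empty space being trivial. Let $\overrightarrow{d}:X\times X\to[0,\infty)$ be the constant function $0$; it satisfies conditions (1) and (2) in the definition of R.O-metric (the hypothesis of (2) never holds), so $(X,\overrightarrow{d})$ is an R.O-metric space. For every nonempty $U\in\tau$ fix a surjection $f_U:X\to U$ (possible since $\emptyset\neq U\subseteq X$), with $f_X=Id_X$, and set $\beta=\{f_U\;|\;U\in\tau,\ U\neq\emptyset\}$; then $Id_X=f_X\in\beta$ as required.

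Next I would unwind the definitions. Because $\overrightarrow{d}\equiv 0$, for any $\alpha\in I$ with $f_\alpha=f_U$, any $r>0$ and any $x$ with $x\in f_\alpha(X)=U$ we get $V_{r,\alpha}(x)=\{f_\alpha(y)\;|\;y\in X\}=f_\alpha(X)=U$, while $V_{r,\alpha}(x)=\emptyset$ whenever $x\notin f_\alpha(X)$. Hence $S_{\overrightarrow{d},\beta}=\{f_\alpha(X)\;|\;\alpha\in I\}=\tau\setminus\{\emptyset\}$. It then remains to confirm that $(X,\overrightarrow{d},\beta)$ really is a generalized R.O-metric space, i.e. the displayed condition in the definition holds: given $y\in V_{r,\alpha}(x)=f_\alpha(X)$, take $\eta=\alpha$ and $t=1$; then $y\in f_\eta(X)$ and $V_{t,\eta}(y)=f_\alpha(X)=V_{r,\alpha}(x)$, so the required inclusion is immediate. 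By the Note preceding the theorem $S_{\overrightarrow{d},\beta}$ is then a basis, and the topology it generates is exactly $\tau$, so $\tau_{(\overrightarrow{d},\beta)}=\tau$ and $(X,\tau)$ is generalized R.O-metrizable.

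I do not expect a genuine obstacle along this route; the one thing to be careful about is reading the definition of $V_{r,\alpha}(x)$ correctly — it is the full image $f_\alpha(X)$ (the $\overrightarrow{d}$-ball imposing nothing here), and it is empty unless $x\in f_\alpha(X)$, which is why exactly the nonempty open sets, each hit by some $x$, show up in $S_{\overrightarrow{d},\beta}$. If one prefers a less degenerate $\overrightarrow{d}$, the three preliminary lemmas point to an alternative: by Lemma \ref{15} replace $(X,\tau)$ by its $T_0$-quotient $(\mathcal{A}_\tau,\tau^{'})$, establish the generalized-R.O analogue of Lemma \ref{3} so that it suffices to treat $T_0$ spaces, and then build $\beta$ together with a $\overrightarrow{d}$ of the ``distance $0$ from certain base points, distance $1$ otherwise'' type, as in Examples \ref{5} and \ref{6}, invoking Lemma \ref{2} to locate an isolated point in the finite case. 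But the constant-$0$ construction above already settles the theorem in full generality.
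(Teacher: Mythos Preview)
Your argument is correct. The constant-zero $\overrightarrow{d}$ is indeed an R.O-metric (condition~(2) is vacuous), each $V_{r,\alpha}(x)$ collapses to $f_\alpha(X)$ whenever $x\in f_\alpha(X)$, the required local-basis condition in the definition of generalized R.O-metric space is satisfied by taking $\eta=\alpha$, and $S_{\overrightarrow{d},\beta}=\tau\setminus\{\emptyset\}$ generates $\tau$. The only places to be slightly careful are the ones you already flag: the existence of a surjection $X\to U$ for every nonempty $U\subseteq X$ (clear, e.g.\ the identity on $U$ and a constant elsewhere), and the requirement $Id_X\in\beta$, which you get from $f_X=Id_X$.

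Your route is genuinely different from, and considerably shorter than, the paper's. The paper first passes to the $T_0$-quotient $(\mathcal{A}_\tau,\tau')$ via Lemma~\ref{15}, then embeds this $T_0$ space into a product $Y^J$ of Sierpi\'nski two-point spaces using the family of characteristic maps of closed sets, puts a nontrivial R.O-metric $\overrightarrow{D}$ on $Y^J$ via a well-ordering of $J$, and finally introduces the family $\beta$ only to recover the subspace topology on the image $F(X)\subseteq Y^J$; the transfer back to $X$ is handled by an analogue of Lemma~\ref{3}. What this buys the paper is a concrete, nondegenerate $\overrightarrow{D}$ that already generates the ambient product topology before $\beta$ is brought in, so the role of $\beta$ is limited to a subspace correction. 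What your approach buys is a one-paragraph proof with no embeddings, no well-orderings, and no reduction to the $T_0$ case: by pushing all the work into $\beta$ you show that the generalized structure is flexible enough to encode an arbitrary topology directly.
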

	\begin{proof}
		Suppose $(X,\tau^{'})$ is an arbitrary $T_0$ topological space. Now let $(Y,\tau^{''})$ be a topological space where $Y=\{0,1\}$ and $\tau^{''}=\{\emptyset ,\;\{1\},\;Y\}$ and $\overrightarrow{d}(0,1)=0=\overrightarrow{d}(0,0)=\overrightarrow{d}(1,1),\;\overrightarrow{d}(1,0)=1$ (obviously $\tau_{\overrightarrow{d}}=\tau^{''}$). Assume that $C$ is a proper closed subset of $(X,\tau)$ , Define $f_{C}:X\to Y$ as follows :
		$f_{C}(C)=0$ and $f_{C}(X-C)=1$ 
		$f$ is obviously continuous, Let $J=\{f_{C}\;|\;X-C\in \tau\}$. $J$ is a family of continuous maps that separates points from closed sets. Define $$F:X\to Y^J\;\;\;,F(x)=(f_{\alpha}(x))_{\alpha\in J}$$ obviously $F$ is an embedding when $Y^J$ is equipped with the product topology $(F(X)\cong X)$.
		Let $ (<)$ be a well-ordering on $J$ define $$\overrightarrow{D}((x_{\alpha}),(y_{\alpha}))=\overrightarrow{d}(1,y_{\beta})$$ where $\beta$ is the smallest index in $(J,<)$  which $x_{\beta}=1$. $\overrightarrow{D}$ is a R.O metric that generates the product topology of $Y^J$ therefore $(F(X),\overrightarrow{D}|_{F(X)})$ is a R.O-metric space.We know that topology induced of $ (Y^J,\tau_{\overrightarrow{D}})$ on $F(X)$ is equal to $\{U\cap F(X)\;|\; U\in \tau_{\overrightarrow{D}}\}$ which is equal to the topology generated by $\{V_{r}(y)\cap F(X)\;|\;r>0\;,\;y\in Y^J\}$. Now let $A=\{V_{r}(y)\cap F(X)\;|\;r>0\;,\;y\in Y^J-F(X)\}$,  consider $B=\{f_{r,y}:X\to X\;|\;  y\in Y^J-F(X) \;,\; r>0\;,f_{r,y}(X)=V_{r}(y)\cap F(X)\}\cup\{Id_X\}$ we claim that $(F(X),\overrightarrow{D}|_{F(X)},B)$ is a generalized R.O-metric space, it suffices to prove that $\tau_{\overrightarrow{D}|_{F(X)},B}$ generates the induced topology of the product topology of $Y^J$ on $F(X)$. For $y\in Y^J$ and $r>0$ if $y\in F(X)$ then $V_{r}(y)\cap F(X) \in \tau_{\overrightarrow{D}|_{F(X)}}$. Also suppose $y\notin F(X)$, $r>0$ and $z\in V_{r,y}(y)=f_{r,y}(X)=V_{r}(y)\cap F(X)$, then $V_{r,y}(z)\subseteq V_{r,y}(y)$.Thus $\tau_{\overrightarrow{D}|_{F(X)},B}$ generates the induced topology of the product topology of $Y^J$ on $F(X)$. Since $F(X)\cong X$, therefore $(X,\tau^{'})$ is a generalized R.O-metrizable. By Lemma \ref{3} $(A,\tau^{'})$ is a $T_0$-topological space, thus there exist $\overrightarrow{d}$ and $\beta=\{f_{\alpha}:A\to A\}\cup \{Id_X\}$ such that $(A,\overrightarrow{d},\beta)$ is a generalized R.O-metric space. Define $$\overrightarrow{D}(x,y)=\begin{cases}0&[x]=[y]\\\overrightarrow{d}(x,y)&[x]\ne [y]\end{cases}$$ and $$F_{\alpha}:X\to X\;\;,\;\;F_{\alpha}(x)=g_{[x]}(x)$$where $g_{[x]}$ is a map between $[x]$ and $f_{\alpha}([x])$. Therefore $(X,\tau)$ is a generalized R.O-metrizable.
	\end{proof}
\noindent
At the end, it is useful to see the figure \ref{fig:16} for understanding the subject.

	
	\section*{Acknowledgements}
	 I want to thank professor Brian M. Scott for his beautiful example that helped me and thank to my dear friend Arshia Gharagozlou for helping me with editing the paper.

\newpage
\bibliographystyle{plainnat}
\bibliography{Ref}
\begin{figure}[]
	\centering
	\includegraphics[width=0.7\linewidth]{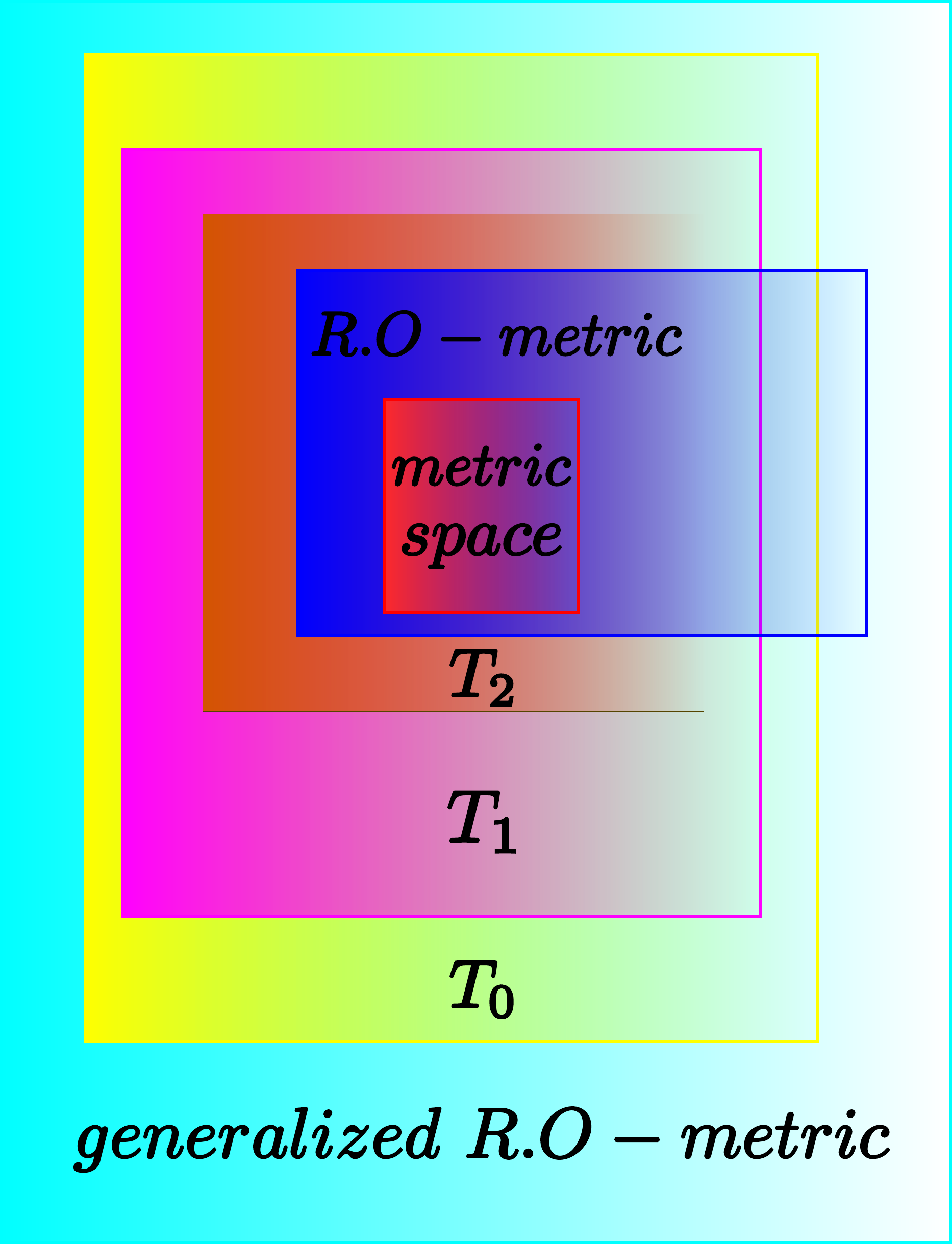}
	\caption{diagram for topological spaces}
	\label{fig:16}
\end{figure}
\end{document}